\newcommand{\N}{{\mathbb N}}
\newcommand{\Q}{{\mathbb Q}}
\newcommand{\C}{{\mathbb C}}
\newtheorem{theorem}{Theorem}
\newtheorem{proposition}[theorem]{Proposition}
\newenvironment{proof}{\noindent{\bf Proof.}}{{\hfill $ \Box $}\vskip 4mm}
\definecolor{addresscolor}{rgb}{0,0,0}
\newcommand{\address}[2]{\def\@addressstreet{#1}\def\@addresscity{#2}}
\newcommand{\email}[1]{\def\@email{#1}}
\title{Primitive Central Idempotents of Rational Group Algebras}
\author{Geoffrey Janssens}
\date{} 
\begin{document}

\maketitle

\begin{abstract}
We give a description of the primitive central idempotents of the rational group algebra $\Q G$ of a finite group $G$. Such a description is already investigated by Jespers, Olteanu and del R\'\i o, but some unknown scalars are involved. Our description also gives answers to their questions. 
\end{abstract}


\vspace{0,8cm}

Let $G$ be a finite group. The complex group algebra $\C G$ is semisimple and a description of its primitive central idempotents is well known. These are the elements $e(\chi)= \frac{1}{|G|} \sum_{g\in G} \chi(1) \chi(g^{-1})g$, where $\chi$ runs through the irreducible characters of $G$. Using Galois descent one obtains that the primitive central idempotents of the semisimple rational group algebra $\Q G$ are the elements $e_{\Q}(\chi) = \sum_{\sigma \in G_{\chi}} \sigma(e(\chi))$, with $G_{\chi}= Gal(\Q(\chi)/\Q)$.
Rather recently, Olivieri et al. \cite{ORS} obtained a character free method for describing the primitive central idempotents of $\Q G$ provided that $G$ is a finite monomial group. Their method relies on a theorem of Shoda on pairs of subgroups $(H,K)$ of $G$ with $K$ normal in $H$, $H/K$ abelian an so that an irreducible linear character of $H$ with kernel $K$ induces an irreducible character of $G$. Such pairs are called Shoda pairs. The main ingredient in this theory are the elements $\epsilon(H,K)$ of $\Q H$ with $K \triangleleft H \leq G$. These are defined as $\prod_{M/K \in \mathcal{M}(H/K)}(\tilde{K} - \tilde{M})$ if $H \neq K$ and as $\tilde{H}$ if $H=K$, where $\mathcal{M}(H/K)$ denotes the set of minimal non-trivial normal subgroups of $H/K$ and $\tilde{K}= \frac{1}{|K|} \sum_{k \in K} k$. Furthermore, $e(G,H,K)$ denotes the sum of the different $G$-conjugates of $\epsilon(H,K)$.

For arbitrary finite groups, Jespers, Olteanu and del R\'\i o obtained a description of $e_{\Q}(\chi)$ in \cite{Jespers1}. It is shown that $e_{\Q}(\chi)$ is a $\Q$-linear combination of the elements $e(G, H_{i}, K_{i})$, with $(H_{i}, K_{i})$ Shoda pairs in some subgroups of $G$. They posed the question (\cite[Remark 3.4]{Jespers1}) whether one could determine the scalars and the Shoda pairs involved. 
In this paper we answer both questions by giving a full description of the primitive central idempotents of $\Q G$, for $G$ a finite group.

Throughout G is a finite group. For $\chi$ an arbitrary complex character of $G$ we put:
$e(\chi) = \frac{1}{|G|} \sum_{g\in G} \chi(1) \chi(g^{-1})g$
and
$e_{\Q}(\chi) = \sum_{\sigma \in G_{\chi}} \sigma(e(\chi)).$
%
Note that in general these elements do not have to be idempotents. 
Recall that the M\"obius $\mu$-function, $\mu: \N \rightarrow \{ -1, 0, 1 \}$, is the map defined by $\mu(1)=1$, $\mu(n)= 0$ if $a^{2}|n  \mbox{ with } a > 1$ and $\mu(n)= (-1)^{r}$ if $n= p_{1}p_{2} \ldots p_{r}$ for different primes $p_{1},\ldots, p_{r}$. The induction of a character $\phi$ of a subgroup $H$ to $G$ is defined as $\phi_{H}^{G}(g) = \frac{1}{|H|} \sum_{y \in G} \dot{\phi}(y^{-1} g y)$, where $\dot{\phi}(x)= \phi(x)$ if $x \in H$ and $\dot{\phi}(x)=0$ otherwise. By $1_{G}$ we denote the trivial character of $G$.  


To prove our result we make use of the Artin Induction Theorem. Although this is probably well known, we state and prove it in the following specific form. Recall that for a rational valued character $\chi$ of a group $G$, $\chi(g)= \chi(g^{i})$ for $(i,o(g))=1$.
%
\begin{proposition}(Artin) \label{Artin modified}
If $\psi$ is a rational valued character of $G$, then
$$\psi = \sum_{i=1}^{r} d_{C_{i}} 1_{C_{i}}^{G},$$
where the sum runs through a set $\{ C_{1}, \ldots ,C_{r} \}$ of representatives of conjugacy classes of cyclic subgroups of $G$. Furthermore, if $C_{i} = \langle c_{i} \rangle$ then
$$d_{C_{i}} = \frac{[G:Cen_{G}(c_{i})]}{[G:C_{i}]}\sum_{C_{i}^{*} \geq C_{i}} \mu([C_{i}^{*}:C_{i}]) \psi(z^{*}),$$
where the sum runs through all the cyclic subgroups $C_{i}^{*}$ of $G$ containing $C_{i}$ and $C_{i}^{*} =\langle z^{*} \rangle.$
\end{proposition}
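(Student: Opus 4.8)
The plan is to verify the claimed identity $\psi=\sum_{i=1}^{r}d_{C_i}\,1_{C_i}^G$ by evaluating both sides on well chosen elements and then solving the resulting linear system by M\"obius inversion. Since $\psi$ is a rational valued class function, the recalled identity $\psi(g)=\psi(g^{i})$ for $(i,o(g))=1$ shows that $\psi(g)$ depends only on the $G$-conjugacy class of the cyclic subgroup $\langle g\rangle$, and the same is trivially true of each $1_{C_i}^G$. Hence it suffices to check the identity after evaluation at a generator $z$ of $C$, as $C$ runs over one representative of each conjugacy class of cyclic subgroups of $G$. That a $\Q$-linear expansion of $\psi$ in the $1_{C_i}^G$ exists and is unique is the content of the classical Artin induction theorem together with the $\Q$-linear independence of the $1_{C_i}^G$ (which also drops out of the computation below, the relevant matrix being triangular with non-zero diagonal entries); the real task is to pin down the scalars $d_{C_i}$.

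First I would compute, for cyclic subgroups $C=\langle z\rangle$ and $C'$ of $G$, the value $1_{C'}^G(z)=\frac{1}{|C'|}\,\bigl|\{\,y\in G:y^{-1}zy\in C'\,\}\bigr|$. Partitioning the set of such $y$ according to the value $x=y^{-1}zy$ (each non-empty fibre being a coset of $Cen_G(z)$), and using that a $G$-conjugate of $z$ lying in $C'$ must generate the unique subgroup of $C'$ of order $o(z)$ while the conjugating element may be chosen in $N_G(\langle z\rangle)$, one obtains $1_{C'}^G(z)=|N_G(C)|/|C'|$ when $C'$ has a subgroup $G$-conjugate to $C$, and $1_{C'}^G(z)=0$ otherwise. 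Substituting this into $\psi=\sum_i d_{C_i}1_{C_i}^G$ and re-indexing the sum over conjugacy classes of cyclic subgroups as a sum over all cyclic subgroups of $G$ (which brings in the factors counting the sizes of the conjugacy classes of subgroups), the identity becomes equivalent to the system
\[
\psi(z)=\sum_{\substack{D\ \mathrm{cyclic}\\ \langle z\rangle\le D\le G}}[N_G(D):D]\,d_{[D]},
\]
one equation for each conjugacy class of cyclic subgroup of $G$.

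It then remains to invert this triangular system. The key remark is that, in the poset of all cyclic subgroups of $G$ ordered by inclusion, an interval $[C,D]$ consists exactly of the subgroups of the cyclic group $D$ containing $C$; as $D$ is cyclic this interval is isomorphic to the lattice of divisors of $[D:C]$, so its M\"obius function equals $\mu([D:C])$, with $\mu$ the number-theoretic M\"obius function recalled above. Since every principal filter of this poset is finite, M\"obius inversion applies to the displayed relation, with $e_D:=[N_G(D):D]\,d_{[D]}$, and gives $[N_G(C):C]\,d_{[C]}=\sum_{D\ge C}\mu([D:C])\,\psi(z^{*})$, where $z^{*}$ is a generator of $D$. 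Dividing by $[N_G(C):C]$ and writing $\tfrac{1}{[N_G(C):C]}=\tfrac{[G:N_G(C)]}{[G:C]}$ (the quantity written $\tfrac{[G:Cen_G(c_i)]}{[G:C_i]}$ in the statement) yields the asserted formula; the reversibility of the inversion also re-proves the existence part.

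I expect the main obstacle to be the second step: carrying out the orbit counting for $1_{C'}^G(z)$ carefully enough that the clean value $|N_G(C)|/|C'|$ (and the diagonal entry $[N_G(C):C]$) emerges, and then bookkeeping correctly the normalizer indices that arise when passing between a sum over conjugacy classes of cyclic subgroups and a sum over all cyclic subgroups, so that exactly the index of the theorem survives. By contrast, once the intervals of the poset of cyclic subgroups are identified with divisor lattices, the M\"obius inversion itself is routine.
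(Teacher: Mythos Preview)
Your approach differs substantially from the paper's. The paper does not derive the coefficients itself: it cites a version of Artin's induction theorem from Huppert in which $\psi=\sum_{C}a_C\,1_C^G$, summed over \emph{all} cyclic $C\le G$, already comes equipped with the explicit scalars $a_C=\tfrac{|Cen_G(c)|}{|G|}\,d_C$; the only work in the paper is to check that both $a_C$ and $1_C^G$ are constant on $G$-conjugacy classes of cyclic subgroups, so that grouping the terms replaces $\sum_C$ by $\sum_i k_i$ (with $k_i$ the size of the class) and the claimed $d_{C_i}$ drops out. You instead give a self-contained argument: evaluate $1_{C'}^G$ on generators, rewrite the sought identity as a triangular linear system indexed by the poset of cyclic subgroups under inclusion, identify each interval $[C,D]$ with the divisor lattice of $[D:C]$, and solve by M\"obius inversion. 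Your route is more informative in that it actually produces the scalars rather than quoting them, and it simultaneously establishes existence and uniqueness.

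One point deserves care. Your inversion correctly yields the prefactor $\frac{1}{[N_G(C):C]}=\frac{[G:N_G(C)]}{[G:C]}$, which in the last line you identify with the expression $\frac{[G:Cen_G(c_i)]}{[G:C_i]}$ appearing in the statement. These coincide only when $Cen_G(c)=N_G(\langle c\rangle)$, which fails for instance for $G=S_3$ and $C=\langle(123)\rangle$; and indeed the formula with $Cen_G(c_i)$ already gives the wrong value for the trivial character of $S_3$. The paper's own proof makes the matching slip when it sets $k_i=|G|/|Cen_G(c_i)|$ for the number of cyclic subgroups conjugate to $C_i$ (the correct count is $|G|/|N_G(C_i)|$). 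So your derivation is in fact the right one; the discrepancy in your final identification comes from the statement rather than from your method.
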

\begin{proof}
For every cyclic subgroup $C=\langle c \rangle$ of $G$, there exists exactly one $i \in \{ 1, \ldots, r \} $ such that $C$ is $G$-conjugated to $C_{i}$. Say, $C= C_{i}^{g^{-1}}$. Set $a_{C}= \frac{|Cen_{G}(c)|}{|G|} d_{C}$.
First we prove that $a_{C} = a_{C_{i}}$ and $1_{C}^{G}= 1_{C_{i}}^{G}$.
To prove the second equality, note that $1_{C}^{G}(g) = \frac{1}{|C|} \sum_{y \in G} \dot{1}_{C}(y^{-1} g y)$, where the function $\dot{1}_{C}(y^{-1} g y)$ is defined as $1$ if $ y^{-1}g y \in C$ and $0$ otherwise. 
This combined with the facts that conjugation preserves the order of subgroups and that it is an automorphism of $G$ we easily see that $1_{C}^{G}= 1_{C_{i}}^{G}$.

Now we prove that $a_{C} = a_{C_{i}}$. Define the sets $(C_{i})\uparrow^{\geq} = \{ K~|~C_{i} \leq K \leq G \}$ and 
$(C)\uparrow^{\geq} = \{ K~|~ C \leq K \leq G \}$. There is a bijective correspondance between these sets. A map from $(C_{i})\uparrow^{\geq}$ to $(C)\uparrow^{\geq}$ is given by conjugation with $g^{-1}$ and the invers map is conjugation by $g$. Along with the fact that $C^{g} = \langle c^{g} \rangle$ if $C = \langle c \rangle$ and $|C|=|C^{g}|$, we see immediatly that $a_{C}= a_{C_{i}}$.

All this yields, $\sum_{C} a_{C} 1_{C}^{G} = \sum_{i=1}^{r} k_{i} a_{C_{i}} 1_{C_{i}}^{G} = \sum_{i=1}^{r} d_{C_{i}} 1_{C_{i}}^{G}$, where $k_{i} = |\mathcal{C}_{G}(c_{i})| = \frac{|G|}{|Cen_{G}(c_{i})|}$ (with $\mathcal{C}_{G}(c_{i})$ the conjugacy class of $c_{i}$ in $G$). The result now follows from Artin's Induction Theorem,\cite[page 489]{Huppert}, which says that every rational valued character of $G$ is of the form $\sum_{C} a_{C} 1_{C}^{G}$, with $a_{C}$ as above and the sum runs over all cyclic subgroups $C$ of $G$.
\end{proof}

Recall that by $\tilde{C_{i}}$ we denote $\epsilon(C_{i},C_{i})= \frac{1}{|C_{i}|} \sum_{c_{i} \in C_{i}} c_{i}$.

\begin{theorem} \label{main theorem}
Let $G$ be a finite group and $\chi$ an irreducible complex character of $G$. Let $C_{i} = \langle c_{i} \rangle$, then we denote
$$b_{C_{i}} = \frac{[G:Cen_{G}(c_{i})]}{[G:C_{i}]} \sum_{C^{*}_{i} \geq C_{i}} \mu([C^{*}_{i}:C_{i}]) (\sum_{\sigma \in G_{\chi}} \sigma(\chi))(z^{*})$$
where the sum runs through all the cyclic subgroups $C_{i}^{*}$ of $G$ which contain $C_{i}$  and $z^{*}$ is a generator of $C_{i}^{*}$.
Then
$$e_{\Q}(\chi) =\sum_{i=1}^{r} \frac{b_{C_{i}} \, \chi(1)}{[G: Cen_{G}(\tilde{C_{i}})]} e(G, C_{i}, C_{i})= \sum_{i=1}^{r} \frac{b_{C_{i}} \, \chi(1)}{[G:C_{i}]}(\sum_{k=1}^{m_{i}} \tilde{C_{i}}^{g_{ik}}),$$
where the first sums runs through a set $\{ C_{1}, \ldots ,C_{r} \}$ of representatives of conjugacy classes of cyclic subgroups of $G$ and $T_{i} = \{ g_{i1},\ldots, g_{im_{i}} \}$ a right transversal of $C_{i}$ in $G$.
\end{theorem}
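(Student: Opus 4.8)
The plan is to express $e_{\mathbb{Q}}(\chi)$ in terms of the elements $1_C^G$ via Artin induction, then translate each $1_C^G$ into a central idempotent $e(G,C,C)$. The key observation is that $e_{\mathbb{Q}}(\chi) = e_{\mathbb{Q}}(\chi)$ is associated with the rational-valued character $\psi := \sum_{\sigma \in G_\chi} \sigma(\chi)$, which is the sum of the Galois conjugates of $\chi$. Applying Proposition~\ref{Artin modified} to $\psi$ gives $\psi = \sum_{i=1}^r d_{C_i} 1_{C_i}^G$ with the coefficients $d_{C_i}$ explicitly given; note that $b_{C_i} = d_{C_i}$ in the notation of the theorem (this is just a renaming, with $\psi$ substituted). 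So the real content is: first, relate $e_{\mathbb{Q}}(\chi)$ to $e(\psi)$ or directly to a linear combination of terms built from the $1_{C_i}^G$; and second, identify the image of $1_C^G$ under the natural map sending a character to the corresponding element of $\mathbb{Q}G$.

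First I would record the dictionary between characters and group-algebra elements. For any character $\theta$ of $G$, write $\omega(\theta) = \frac{1}{|G|}\sum_{g \in G} \theta(g^{-1}) g \in \mathbb{Q}G$ (so $e(\chi) = \chi(1)\,\omega(\chi)$ up to the $\chi(1)$ factor built into the definition). This $\omega$ is $\mathbb{Q}$-linear and additive on characters. I would then compute $\omega(1_C^G)$ for a cyclic subgroup $C$: by Frobenius reciprocity / the induction formula, $1_C^G(g^{-1})$ counts (weighted) the conjugates of $C$ containing $g^{-1}$, and a direct computation shows $\omega(1_C^G) = \frac{1}{[G:C]} \sum_{k} \tilde{C}^{g_k}$ where $g_k$ runs over a right transversal of $C$ in $G$ — i.e. $\omega(1_C^G)$ is (a normalization of) $e(G,C,C)$, since $\epsilon(C,C) = \tilde C$ and $e(G,C,C)$ is the sum of distinct $G$-conjugates of $\tilde C$. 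Here I must be careful about the difference between "sum over a transversal" (which may repeat conjugates, each $\tilde C^{g}$ appearing $[\mathrm{Cen}_G(\tilde C):C]$ times) and "sum over distinct conjugates"; this accounts for the passage between the two displayed forms of the answer, via $[G:\mathrm{Cen}_G(\tilde C_i)]$ versus $[G:C_i]$.

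Next I would apply $\omega$ to the Artin decomposition $\psi = \sum_i d_{C_i} 1_{C_i}^G$ to get $\omega(\psi) = \sum_i d_{C_i}\, \omega(1_{C_i}^G) = \sum_i \frac{d_{C_i}}{[G:C_i]} \sum_k \tilde{C_i}^{g_{ik}}$. It then remains to identify $\chi(1)\,\omega(\psi)$ with $e_{\mathbb{Q}}(\chi)$. Indeed $e_{\mathbb{Q}}(\chi) = \sum_{\sigma \in G_\chi} \sigma(e(\chi)) = \sum_{\sigma} \frac{1}{|G|}\sum_g \chi(1)\sigma(\chi)(g^{-1}) g = \chi(1)\,\omega\!\bigl(\sum_\sigma \sigma(\chi)\bigr) = \chi(1)\,\omega(\psi)$, using that $\sigma$ fixes $\chi(1)$ (a rational integer) and permutes nothing in the group elements $g$. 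Combining, $e_{\mathbb{Q}}(\chi) = \chi(1)\sum_i \frac{d_{C_i}}{[G:C_i]}\sum_k \tilde{C_i}^{g_{ik}} = \sum_i \frac{b_{C_i}\chi(1)}{[G:C_i]}\sum_k \tilde{C_i}^{g_{ik}}$, which is the second displayed equality; the first follows by regrouping the transversal sum into distinct conjugates as noted above, using $e(G,C_i,C_i) = \frac{[G:C_i]}{[G:\mathrm{Cen}_G(\tilde C_i)]}\sum_k \tilde C_i^{g_{ik}}$.

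**The main obstacle** I anticipate is the bookkeeping in $\omega(1_C^G) = \tfrac{1}{[G:C]}\sum_k \tilde C^{g_k}$ — i.e. carefully carrying out the induction-formula computation and correctly tracking multiplicities when the $G$-conjugates of $\tilde C$ (equivalently, the $G$-conjugates of $C$, since $\tilde C$ determines $C$) coincide, so that the normalizing indices $[G:C_i]$ and $[G:\mathrm{Cen}_G(\tilde C_i)]$ land in the right places and match the definition of $e(G,C_i,C_i)$. Everything else is linearity of $\omega$, the substitution $\psi = \sum_\sigma \sigma(\chi)$ into Proposition~\ref{Artin modified}, and the elementary fact that Galois automorphisms fix $\chi(1) \in \mathbb{Z}$.
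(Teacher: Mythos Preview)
Your proposal is correct and follows essentially the same route as the paper: apply Artin induction (Proposition~\ref{Artin modified}) to the rational-valued character $\psi=\sum_{\sigma\in G_\chi}\sigma(\chi)$, compute $e(1_{C_i}^G)$ (your $[G:C_i]\,\omega(1_{C_i}^G)$) as the transversal sum $\sum_k \tilde{C}_i^{g_{ik}}$, and use the identity $e_{\Q}(\chi)=\chi(1)\,\omega(\psi)$. The only organizational difference is that the paper first treats the rational-valued case and then reduces the general case to it via $e(\psi)=|G_\chi|\,e_{\Q}(\chi)$, whereas you go directly to the general case --- a harmless streamlining. One slip to fix: in your final line the ratio is inverted; since each distinct conjugate occurs $[\mathrm{Cen}_G(\tilde C_i):C_i]$ times in the transversal sum (as you correctly note earlier), the relation is $e(G,C_i,C_i)=\dfrac{[G:\mathrm{Cen}_G(\tilde C_i)]}{[G:C_i]}\sum_k \tilde{C}_i^{g_{ik}}$.
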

\begin{proof}
Let $\chi$ be an irreducible complex character of $G$. First we suppose that $\chi(G) \subseteq \Q$. Then $G_{\chi}= \{ 1 \}$ and then by Proposition 1, $\chi = \sum_{i=1}^{r} b_{C_{i}} 1_{C_{i}}^{G}$.

We get
$$\begin{array}{lcl}
e_{\Q}(\chi)= e(\chi) & = & \frac{\chi(1)}{|G|} \sum_{g \in G} (\sum_{i=1}^{r} b_{C_{i}} 1_{C_{i}}^{G}(g^{-1})) g\\
        & = & \frac{\chi(1)}{|G|} \sum_{i=1}^{r} \frac{b_{C_{i}}}{1_{C_{i}}^{G}(1)} \sum_{g \in G} 1_{C_{i}}^{G}(1) 1_{C_{i}}^{G}(g^{-1}) g\\
        & = & \frac{\chi(1)}{|G|} \sum_{i=1}^{r} \frac{b_{C_{i}}}{1_{C_{i}}^{G}(1)} |G| e(1_{C_{i}}^{G})\\
        & = & \sum_{i=1}^{r} \frac{b_{C_{i}} \, \chi(1)}{[G:C_{i}]}e(1_{C_{i}}^{G})\\
\end{array}$$
Let $T_{i}= \{ g_{i1}, \ldots, g_{im_{i}} \}$ be a right transversal of $C_{i}$ in $G$. Then
$$\begin{array}{lcl}
e(1_{C_{i}}^{G}) & = & \frac{1}{|G|} \sum_{g \in G} 1_{C_{i}}^{G}(1) 1_{C_{i}}^{G}(g^{-1})g \\
                 & = & \frac{1}{|G|} \sum_{g \in G} \frac{|G|}{|C_{i}|}1_{C_{i}}(1) 1_{C_{i}}^{G}(g^{-1})g\\
                 & = & \sum_{g \in G} \frac{1}{|C_{i}|} (\frac{1}{|C_{i}|} \sum_{y \in G} \dot{1}_{C_{i}} (yg^{-1} y^{-1}))g\\
                 & = & \sum_{g \in G} \frac{1}{|C_{i}|} (\sum_{j=1}^{m_{i}} \dot{1}_{C_{i}} (g_{ij} g^{-1} g_{ij}^{-1}))g\\
                 & = & \frac{1}{|C_{i}|} \sum_{j=1}^{m_{i}} \sum_{h \in C_{i}} 1_{C_{i}}(h^{-1}) g_{ij}^{-1} h g_{ij}\\
                 & = & \sum_{j=1}^{m_{i}} \tilde{C}_{i}^{g_{ij}}\\
\end{array}$$
With this expression for $e(1_{C_{i}}^{G})$ we obtain one of the equalities in the statement of the result.

Obviously, the sum $\sum_{k=1}^{m_{i}} \tilde{C}_{i}^{g_{ik}}$ adds the elements of the $G$-orbit of $\tilde{C}_{i} = \epsilon (C_{i}, C_{i})$ and each of them $[Cen_{G}(\tilde{C}_{i}) : C_{i}]$ times. So
$$\sum_{k=1}^{m_{i}} \tilde{C}_{i}^{g_{ik}}= [Cen_{G}(\tilde{C}_{i}) : C_{i}] e(G,C_{i}, C_{i}).$$
A simple substitution in the earlier found expression for $e_{\Q}(\chi)$ yields the theorem.

\vspace{0,2cm}
Assume now that $\chi$ is an arbitrary irreducible complex character of $G$.
%
%
%
%
%
Then it is clear and well known that $\sum_{\sigma \in G_{\chi}} \sigma \circ \chi$ is a rational valued character of $G$.
Hence, by the first part we get
$$\begin{array}{lcl}
e(\sum_{\sigma \in G_{\chi}} \sigma(\chi)) & = & \frac{1}{|G|} \sum_{g \in G}(\sum_{\sigma \in G_{\chi}} \sigma(\chi(1)))(\sum_{\sigma \in G_{\chi}} \sigma(\chi(g^{-1})))g \\
                                           & = & \frac{|G_{\chi}| \cdot \chi(1)}{|G|} \sum_{\sigma \in G_{\chi}}\sum_{g \in G} \sigma(\chi(g^{-1})))g\\
                                           & = & \frac{|G_{\chi}| \cdot \chi(1)}{|G|} \sum_{\sigma \in G_{\chi}}\sigma(\sum_{g \in G} \chi(g^{-1})g)\\
                                           & = & |G_{\chi}| e_{\Q}(\chi)\\
\end{array}$$

Since $\sum_{\sigma \in G_{\chi}} \sigma(\chi(1)) = |G_{\chi}| \chi(1)$, the rational case yields the theorem.
\end{proof}
We finish with some remarks. First note that the elements $e(G,C_{i},C_{i})$ are not necessarly idempotens. Second, the definition of $b_{C_{i}}$ is not character-free. However one easily obtains a character free upperbound:
$$b_{C_{i}} \leq \frac{[G:Cen_{G}(c_{i})]}{[G:C_{i}]} \sum_{C_{i}^{*} \geq C_{i}} \mu([C_{i}^{*} : C_{i}]) \phi(n) \chi(1) \leq \frac{[G:Cen_{G}(c_{i})][G:Z(G)]}{[G:C_{i}]} \sum_{C_{i}^{*} \geq C_{i}} \mu([C_{i}^{*} : C_{i}]) \phi(n),$$
where $\phi$ denotes the $\phi$-Euler function. Hence, we can obtain a finite algorithm, that easily can be implemented in for example GAP, to compute all primitive central idempotents of $\Q G$. This answers one of the questions posed in \cite[Remark 3.4]{Jespers1}. Also the description of the idempotents only makes use of pairs of subgroups $(C_{i},C_{i})$, with $C_{i}$ cyclic. This answers the second question posed in \cite[Remark 3.4]{Jespers1}.

\vspace{1,0cm}
G.Janssens \\ 
Departement of Mathematics, Vrije Universiteit Brussel, Pleinlaan 2, 1050, Brussels, Belgium \\
e-mail: geofjans@vub.ac.be


\end{document}